\documentclass[a4paper,12pt]{article}
\usepackage{a4wide}
\usepackage{amsmath}
\usepackage{amssymb}
\usepackage{amsthm}
\usepackage{latexsym}
\usepackage{graphicx}
\usepackage[english]{babel}
\usepackage{makeidx}
\usepackage[mathlines]{lineno}
\newcommand{\stir}{\genfrac{[}{]}{0pt}{}}

\newtheorem{obs} [subsection]{Remark}
\newtheorem{exm} [subsection]{Example}

\newtheorem{prop}[subsection]{Proposition}

\newtheorem{teor}[subsection]{Theorem}

\newtheorem{cor} [subsection]{Corollary}

\newcommand{\paa}{p_{\mathbf a}}
\newcommand{\Pa}{P_{\mathbf a}}

\def\p{\operatorname{p}}
\def\pp{\operatorname{pp}}
\newcommand{\pps}{\pp^{s}}
\newcommand{\ppso}{\pp^{so}}
\newcommand{\sunca}{\left\lfloor \frac{n+1}{2} \right\rfloor\cdot \left\lfloor \frac{n+2}{2} \right\rfloor}
\newcommand{\soric}{\binom{\left\lfloor \frac{n}{2} \right\rfloor}{2} +\varepsilon(n)}

\begin{document}
\selectlanguage{english}
\frenchspacing

\numberwithin{equation}{section}

\title{A note on the number of plane partitions and $r$-component multipartitions of $n$}
\author{Mircea Cimpoea\c s$^1$ and Alexandra Teodor$^2$}
\date{}

\maketitle

\footnotetext[1]{ \emph{Mircea Cimpoea\c s}, University Politehnica of Bucharest, Faculty of
Applied Sciences, 
Bucharest, 060042, Romania and Simion Stoilow Institute of Mathematics, Research unit 5, P.O.Box 1-764,
Bucharest 014700, Romania, E-mail: mircea.cimpoeas@upb.ro,\;mircea.cimpoeas@imar.ro}
\footnotetext[2]{ \emph{Alexandra Teodor}, University Politehnica of Bucharest, Faculty of
Applied Sciences, 
Bucharest, 060042, E-mail: alexandra.teodor@upb.ro}

\begin{abstract}
Using elementary methods, we prove new formulas for $\pp(n)$, the number of plane partitions of $n$, $\pp_r(n)$, the number
of plane partitions of $n$ with at most $r$ rows, $\pps(n)$, the number of strict plane partitions of $n$ 
and $\ppso(n)$, the number of symmetric plane partitions of $n$. Also, we give new formulas for $P_r(n)$,
the number of $r$-component multipartitions of $n$.

\textbf{Keywords}: Integer partition, Restricted partition function, Plane partition, Multipartition.

\textbf{MSC2010}: 11P81, 11P83.
\end{abstract}

\maketitle
\section{Introduction}

Let $n$ be a positive integer. We denote $[n]=\{1,2,\ldots,n\}$.
A partition of $n$ is a non-increasing sequence $\lambda=(\lambda_1,\ldots,\lambda_m)$ of positive integers such that 
$|\lambda|=\lambda_1+\cdots+\lambda_m=n$. We define
$\p(n)$ as the number of partitions of $n$ and for convenience, we define $p(0) = 1$.
This notion has the following generalization:
A \emph{plane partition} of $n$ is an array $(n_{ij})_{i,j\in [n]}$ of nonnegative integers such that 
$$\sum_{i,j\in [n]}n_{ij}=n\text{ and }n_{ij}\geq n_{i'j'}\text{ for all }i,j,i',j'\in [n]\text{ such that }i\leq i'\text{ and }j\leq j'.$$ 
If $n_{ij}>n_{i(j+1)}$ whenever $n_{ij}\neq 0$, then we shall call such a partition \emph{strict}.
If $n_{ij}=n_{ji}$ for all $i$ and $j$, then the partition is called \emph{symmetric}.

For example, there are $6$ plane partitions of $n=3$, namely:
$$ \begin{matrix} 3 & 0 & 0 \\ 0 & 0 & 0 \\ 0 & 0 & 0 \end{matrix},\;\;\;\;\; 
   \begin{matrix} 2 & 1 & 0 \\ 0 & 0 & 0 \\ 0 & 0 & 0 \end{matrix},\;\;\;\;\;
   \begin{matrix} 2 & 0 & 0 \\ 1 & 0 & 0 \\ 0 & 0 & 0 \end{matrix},\;\;\;\;\; 
   \begin{matrix} 1 & 1 & 1 \\ 0 & 0 & 0 \\ 0 & 0 & 0 \end{matrix},\;\;\;\;\; 
	 \begin{matrix} 1 & 1 & 0 \\ 1 & 0 & 0 \\ 0 & 0 & 0 \end{matrix},\;\;\;\;\;
	 \begin{matrix} 1 & 0 & 0 \\ 1 & 0 & 0 \\ 1 & 0 & 0 \end{matrix}. $$
Note that four of them are strict partitions and two of them are symmetric. Also, three of them have nonzero entries only 
on the first row and five of them have nonzero entries on the first two rows.

We denote $\pp(n)$ the total number of plane partitions of $n$ and, we define $\pp(0)=1$.
The properties of $\pp(n)$ have been extensively studied in literature. As a curiosity,
we mention that the function $\pp(n)$ appears in physics in connection with the
enumeration of small black holes in string theory, see \cite[Appendix E]{blackholes}.

Let $k\geq 1$ be an integer. We denote $\pp_r(n)$, the number of plane partitions with at most $r$ rows, and $\pp_k(0)=1$. 
Note that $\pp_1(n)=\p(n)$ for all $n\geq 0$. Also, if $k\geq n$ then $\pp_k(n)=\pp(n)$.
In the example above, we have $\pp_1(3)=\p(3)=3$, $\pp_2(3)=5$ and $\pp_3(3)=\pp(3)=6$.

We denote $\pps(n)$, the number of strict plane partitions of $n$, and $\ppso(n)$,
the number of strict plane partitions of $n$ with odd parts.
We set $\pps(0)=\ppso(n)=1$. It is well known that $\ppso(n)$ counts also the number of 
symmetric plane partitions of $n$. 

Let $\mathbf a := (a_1, a_2, \ldots , a_r)$ be a sequence of positive integers, $r \geq 1$. The \emph{restricted partition
function} associated to $\mathbf a$ is $\paa : \mathbb N \to \mathbb N$, $\paa(n) :=$ the number of integer solutions $(x_1, \ldots, x_r)$
of $\sum_{i=1}^r a_ix_i = n$ with $x_i \geq 0$. Note that the generating function of $\paa(n)$ is
\begin{equation}\label{gen}
\sum_{n=0}^{\infty}\paa(n)z^n= \frac{1}{(1-z^{a_1})\cdots(1-z^{a_r})},\;|z|<1.
\end{equation}
See \cite[Chapter 5]{andrews} for further details.

Our aim is to provide new formulas for $\pp(n)$, $\pp_r(n)$, $\pp^s(n)$ and $\pp^{so}(n)$ 
using their generating functions and the relation with the restricted partition function $\p_{\mathbf a}(n)$; 
see Proposition \ref{p1}.
In Theorem \ref{formu}, we prove a new formula for $\pp(n)$ in terms
of coefficients of the polynomials 
$$f_s(z)=(1+z+\cdots+z^{\alpha_s})^s,\;\text{ where }\alpha_s=\frac{D_n}{s}-1,$$
where $1\leq s\leq n$ and $D_n$ is the least common multiple of $1,2,\ldots,n$.
Similarly, we provide formulas for $\pp_r(n)$, $\pps(n)$ and $\ppso(n)$ in Theorem \ref{formu2}, Theorem \ref{formu3}
and Theorem \ref{formu4}, respectively. 
Also, using a result from \cite{lucrare} regarding the restricted partition function, we deduce other formulas for 
$\pp(n)$, $\pp_r(n)$, $\pps(n)$ and $\ppso(n)$; see Theorem \ref{tn}.

A $r$-component multipartition of $n$ is a $r$-tuple $\lambda=(\lambda^1,\ldots,\lambda^r)$ of partitions of $n$
such that $|\lambda|=|\lambda^1|+\cdots+|\lambda^r|=n$; see \cite{andrews2}. We denote $P_r(n)$, the number or
$r$-component multipartitions of $n$ and $P_r(0)=1$. In Proposition \ref{p32} we show that 
$$\pp_r(n)=\sum_{\substack{0\leq t_1\leq r-1 \\ \vdots \\ 0\leq t_{r-1}\leq 1}}(-1)^{t_1+\cdots+t_{r-1}}P_r(n-t_1-2t_2-\cdots-(r-1)t_{r-1}),$$
where $P_r(j)=0$ for $j<0$. 

In Theorem \ref{formu5} we prove a new formula for $P_r(n)$ and we deduce from it a new expression
for $\pp_r(n)$; see Corollary \ref{c35}. Also, in Theorem \ref{tn2} we obtain other formula for $P_r(n)$.

\newpage
\section{New formulas for the number of plane partitions}

\begin{itemize}
\item Let $n,k$ be two positive integers.

\item Let $\pp(n)$ be the number of plane partitions of $n$. We define $\pp(0)=1$.
 
\item Let $\pp_r(n)$ be the number of plane partitions of $n$ with at most $r$ rows. We also define $\pp_r(0)=1$.
Note that, if $n\leq r$ then $\pp(n)=\pp_r(n)$.

\item Let $\pps(n)$ be the number of strict plane partitions of $n$. We set $\pps(0)=1$.

\item Let $\ppso(n)$ be the number of strict plane partitions of $n$ with odd parts. We set $\ppso(n)=1$
As it was shown in \cite{gordonv}, $\ppso(n)$ is equal to the number of symmetric plane partitions of $n$.
\end{itemize}

MacMahon \cite{mahon} proved that
\begin{equation}\label{mah1}
\sum_{n=0}^{\infty}\pp(n)z^n = \prod_{n=1}^{\infty} \frac{1}{(1-z^n)^n}\text{ for }|z|<1.
\end{equation}
A refinement of this result is the following
\begin{equation}\label{mah2}
\sum_{n=0}^{\infty}\pp_r(n)z^n = \prod_{n=1}^{\infty} \frac{1}{(1-z^n)^{\min\{n,r\}}}\text{ for }|z|<1,
\end{equation}
see \cite[Equation (10.1)]{andrews}.

Gordon and Houten \cite{gordon} proved that
\begin{equation}\label{gor1}
 \sum_{n=0}^{\infty}\pps(n)z^n = \prod_{n=1}^{\infty} \frac{1}{(1-z^n)^{\left\lfloor (n+1)/2 \right\rfloor}}\text{ for }|z|<1.
\end{equation}
Also, Gordon \cite{gordonv} proved that
\begin{equation}\label{gor2}
\sum_{n=0}^{\infty}\ppso(n)z^n = \prod_{n=1}^{\infty} \frac{1}{(1-z^{2n+1})}\prod_{n=1}^{\infty} \frac{1}{(1-z^{2n})^n} \text{ for }|z|<1.
\end{equation}

Give $n,k$ two positive integers, we denote $n^{[k]}$ the sequence $n,n,\ldots,n$ of length $k$, 
e. g. $2^{[3]}=2,2,2$. We consider the following sequences of integers:
\begin{align*}
& \mathbf n:=(1,2^{[2]},3^{[3]},\ldots,n^{[n]}),\\
& \mathbf n_r:=(1,2^{[\min\{2,r\}]},3^{[\min\{3,r\}]},\ldots,n^{[\min\{n,r\}]}),\\
& \mathbf{n}^s:=(1,2^{[1]},3^{[2]},4^{[2]},\ldots,n^{[\lfloor (n+1)/2 \rfloor]})\text{ and }\\
& \mathbf{n}^{so}:=(1,2^{[1]},3^{[1]},4^{[2]},\ldots,n^{[\lambda(n)]}),
\end{align*}
where $\lambda(n)=\begin{cases} 1,&n\text{ is odd} \\ \frac{n}{2},&n\text{ is even} \end{cases}$.

\begin{prop}\label{p1}
Let $n,k$ be two positive integers. We have that:
\begin{enumerate}
\item[(1)] $\pp(n)=p_{\mathbf n}(n)$ for all $n\geq 0$.
\item[(2)] $\pp_r(n)=p_{\mathbf n_r}(n)$ for all $n\geq 0$.
\item[(3)] $\pps(n)=p_{\mathbf n^s}(n)$ for all $n\geq 0$.
\item[(4)] $\ppso(n)=p_{\mathbf n^{so}}(n)$ for all $n\geq 0$.
\end{enumerate}
\end{prop}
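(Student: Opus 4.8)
The plan is to prove all four parts at once by a single generating-function argument, comparing the generating function of the restricted partition function $\paa$ recorded in \eqref{gen} against the four known product formulas \eqref{mah1}, \eqref{mah2}, \eqref{gor1} and \eqref{gor2}. The first step is to rewrite each sequence in terms of multiplicities: if $\mathbf a$ is a sequence in which the value $k$ occurs with multiplicity $m(k)$, then \eqref{gen} reads
\begin{equation*}
\sum_{j=0}^{\infty}\paa(j)z^j=\prod_{k\geq 1}\frac{1}{(1-z^k)^{m(k)}}.
\end{equation*}
Reading the multiplicities off the definitions, the value $k$ occurs $k$ times in $\mathbf n$, $\min\{k,r\}$ times in $\mathbf n_r$, $\lfloor (k+1)/2\rfloor$ times in $\mathbf n^s$, and $\lambda(k)$ times in $\mathbf n^{so}$. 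I would then observe that in each case these multiplicities are precisely the exponents of $(1-z^k)^{-1}$ occurring in \eqref{mah1}, \eqref{mah2}, \eqref{gor1} and \eqref{gor2}, respectively.

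The key observation is that the finite sequences $\mathbf n,\mathbf n_r,\mathbf n^s,\mathbf n^{so}$ are exactly the truncations of these infinite products to the parts $k\leq n$, and that such a truncation does not change the coefficient of $z^n$. Indeed, for every $k>n$ one has $(1-z^k)^{-m(k)}=1+O(z^k)\equiv 1 \pmod{z^{n+1}}$, so discarding all factors with $k>n$ leaves the coefficient of $z^n$ untouched. For the sequence $\mathbf n$ the resulting computation would read
\begin{equation*}
p_{\mathbf n}(n)=[z^n]\prod_{k=1}^{n}\frac{1}{(1-z^k)^{k}}=[z^n]\prod_{k=1}^{\infty}\frac{1}{(1-z^k)^{k}}=\pp(n),
\end{equation*}
the last equality being \eqref{mah1}; this settles (1). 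Replacing $\mathbf n$ by $\mathbf n_r$, $\mathbf n^s$, $\mathbf n^{so}$ and invoking \eqref{mah2}, \eqref{gor1}, \eqref{gor2} in place of \eqref{mah1} yields (2), (3) and (4) by the identical computation. The boundary case $n=0$ is immediate, since $\paa(0)=1$ and all four plane-partition functions are defined to equal $1$ at $0$.

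Since the whole argument is bookkeeping with power series, the only point that requires genuine care is verifying that the multiplicity function $m(k)$ extracted from each sequence matches the exponent in the corresponding product. For $\mathbf n$ and $\mathbf n_r$ this is transparent; the work lies in $\mathbf n^s$ and $\mathbf n^{so}$, where I would check that the patterns $1,1,2,2,3,3,\dots$ and $1,1,1,2,1,3,\dots$ coincide with $\lfloor (k+1)/2\rfloor$ and with $\lambda(k)$, and in particular that the indexing of the odd-part factor in \eqref{gor2} is aligned so that the part $1$ carries multiplicity $\lambda(1)=1$ in agreement with $\mathbf n^{so}$. This verification is the main obstacle, and it is entirely elementary once the multiplicities are tabulated.
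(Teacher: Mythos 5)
Your argument is essentially the paper's own: both compare the finite product $\prod_{k=1}^{n}(1-z^{k})^{-m(k)}$ supplied by \eqref{gen} with the corresponding infinite product from \eqref{mah1}--\eqref{gor2} and observe that every discarded factor with $k>n$ is $1+O(z^{n+1})$, so the coefficient of $z^{n}$ is unaffected. The paper writes this out only for part (1) and dismisses (2)--(4) as similar, so your explicit tabulation of the multiplicities --- including the flag that the odd-part product in \eqref{gor2} as printed begins at $2n+1=3$ and must be aligned with the part $1$ in $\mathbf n^{so}$ --- is, if anything, slightly more careful than the source.
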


\begin{proof}
(1) From \eqref{gen}, it follows that
\begin{equation}\label{gogu1}
\sum_{m=0}^{\infty}p_{\mathbf n}(m)z^m=\prod_{j=1}^n \frac{1}{(1-z^j)^j} = (1+z+z^2+\cdots)(1+z^2+z^4+\cdots)\cdots (1+z^n+z^{2n}+\cdots).
\end{equation}
On the other hand, from \eqref{mah1}, we have
\begin{equation}\label{gogu2}
\sum_{m=0}^{\infty}\pp(m)z^m = (1+z+z^2+\cdots)(1+z^2+z^4+\cdots)\cdots (1+z^n+z^{2n}+\cdots)(1+z^{n+1}+\cdots)\cdots.
\end{equation}
Comparing \eqref{gogu1} and \eqref{gogu2}, it follows that 
$$\pp(m) = p_{\mathbf n}(m)\text{ for all }0\leq m\leq n,$$
thus, in particular, $\pp(n)=p_{\mathbf n}(n)$, as required.

(2,3,4) The proof is similar to the proof of (1), so we omit it. 
\end{proof}

Let $D_n$ be the least common multiple of $1,2,\ldots,n$.
For all $1\leq s\leq n$, we define
$$f_s(z):=(1+z+\cdots+z^{\alpha_s})^s,\;\text{ where }\alpha_s=\frac{D_n}{s}-1.$$
We write 
\begin{equation}\label{ek}
f_s(z)=f_{s,0}+f_{s,1}z+\cdots+f_{s,s\alpha_s}z^{s\alpha_s}.
\end{equation}
From the definition of $f_s(z)$, we note that 
\begin{equation}\label{fjl}
f_{s,\ell} =\# \{(j_1,\ldots,j_s)\;:\;j_1+\cdots+j_s=\ell,\text{ where }0\leq j_t\leq \alpha_s,\; 1\leq t\leq s\}.
\end{equation}
Note that the polynomial $f_s(z)$ is reciprocal, that is,
$$x^{s\alpha_s}f_s\left(\frac{1}{x}\right) = f_s(x), $$
so that $f_{s,i}=f_{s,s\alpha_s-i}$ for all $i$. On the other hand, we have
$$f_s(z)=\left(\frac{1-z^{\frac{D_n}{s}}}{1-z}\right)^s = (1-z^{\frac{D_n}{s}})^s(1-z)^{-s}.$$
Using the binomial expansion of $(1-z)^{-s}$, it follows that
\begin{equation}\label{yek}
f_s(z)=\sum_{i=0}^s (-1)^i \binom{s}{i} z^{i\frac{D_n}{s}}\times \sum_{j=0}^{\infty} \binom{j+s-1}{j}z^j.
\end{equation}
Comparing \eqref{ek} with \eqref{yek} we deduce that:
\begin{equation}\label{duoh}
f_{s,\ell}=\sum_{i,j\geq 0\text{ with }\frac{iD_n}{s}+j=\ell} (-1)^i\binom{s}{i}\binom{j+s-1}{j},
\end{equation}
for all $0\leq \ell\leq s\alpha_s$.

\begin{teor}\label{formu}
Let $n\geq 3$ be an integer. We have that 
$$\pp(n) = \sum_{(\ell_1,\ldots,\ell_n)\in \mathbf{A}_n} 
\prod_{s=2}^n \sum_{\substack{i_s,j_s\geq 0\text{ with }\\ \frac{i_sD_n}{s}+j_s=\ell_s}} (-1)^{i_s}\binom{s}{i_s}\binom{j_s+s-1}{j_s},$$
where $\mathbf{A}_n:=\{(\ell_1,\ldots,\ell_n)\in\mathbb N^n\;:\; \ell_1+2\ell_2+\cdots+n\ell_n = n\}$.
\end{teor}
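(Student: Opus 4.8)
The plan is to combine the generating-function identity of Proposition~\ref{p1}(1) with the factorization of the polynomials $f_s$ recorded just before the statement. By Proposition~\ref{p1}(1) together with \eqref{gen}, the number $\pp(n)$ equals the coefficient of $z^n$ in
\[
F(z):=\prod_{s=1}^n\frac{1}{(1-z^s)^s},
\]
so it is enough to extract $[z^n]F(z)$.

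First I would rewrite each factor of $F(z)$ through $f_s$. Since $f_s(z)=(1-z^{D_n/s})^s(1-z)^{-s}$, the substitution $z\mapsto z^s$ gives the key identity
\[
f_s(z^s)=\frac{(1-z^{D_n})^s}{(1-z^s)^s},\qquad\text{equivalently}\qquad \frac{1}{(1-z^s)^s}=\frac{f_s(z^s)}{(1-z^{D_n})^s}.
\]
Taking the product over $1\le s\le n$ then yields
\[
F(z)=\Bigl(\prod_{s=1}^n f_s(z^s)\Bigr)(1-z^{D_n})^{-N},\qquad N:=\binom{n+1}{2}.
\]

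The next step is a truncation argument. Expanding $(1-z^{D_n})^{-N}=\sum_{k\ge 0}\binom{N+k-1}{k}z^{kD_n}$ and observing that $D_n=\lcm(1,\ldots,n)$ is a multiple of $\lcm(n-1,n)=n(n-1)$, hence $D_n>n$ exactly once $n\ge 3$, only the term $k=0$ can contribute to the coefficient of $z^n$. Therefore $[z^n]F(z)=[z^n]\prod_{s=1}^n f_s(z^s)$. Writing $f_s(z^s)=\sum_{\ell_s\ge 0}f_{s,\ell_s}z^{s\ell_s}$ from \eqref{ek}, the coefficient of $z^n$ in this product is the sum of $\prod_{s=1}^n f_{s,\ell_s}$ over all tuples with $\ell_1+2\ell_2+\cdots+n\ell_n=n$, that is, over $\mathbf A_n$. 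Because $\alpha_1=D_n-1$, the factor $f_1$ has every coefficient equal to $1$ up to degree $D_n-1$, and since $\ell_1\le n<D_n$ on $\mathbf A_n$ we have $f_{1,\ell_1}=1$; this lets me drop the $s=1$ factor and begin the product at $s=2$. Substituting the closed form \eqref{duoh} for each remaining $f_{s,\ell_s}$ produces the stated double-indexed expression.

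The computation is mostly bookkeeping once the substitution $z\mapsto z^s$ is in place, so the only genuinely delicate point is the truncation: one must verify that $D_n>n$ for $n\ge 3$, so that the spurious factor $(1-z^{D_n})^{-N}$ leaves the coefficient of $z^n$ untouched. This is precisely what fails at $n=2$, where $D_2=2$, and it explains the hypothesis $n\ge 3$.
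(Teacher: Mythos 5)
Your proof is correct and is essentially the paper's argument in generating-function dress: the paper counts solutions of $j_1+2j_2+2j_3+\cdots+nj_{\binom{n+1}{2}}=n$ directly, groups the variables with equal coefficients into $\ell_s$, and observes that the box constraints $0\le j_t\le\alpha_s$ defining $f_{s,\ell_s}$ in \eqref{fjl} are non-binding because $\ell_s\le n/s\le D_n/s-1$ for $n\ge 3$ --- which is exactly the content of your identity $\frac{1}{(1-z^s)^s}=f_s(z^s)(1-z^{D_n})^{-s}$ followed by truncation at $z^n$ using $D_n>n$. The key objects ($f_s$, $\mathbf A_n$, formula \eqref{duoh}, the role of the hypothesis $n\ge 3$) and the intermediate identity $\pp(n)=\sum_{\mathbf A_n}\prod_{s=2}^n f_{s,\ell_s}$ coincide with the paper's, so no substantive difference remains.
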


\begin{proof}
From Proposition \ref{p1}(1) it follows that
\begin{equation}\label{pepene}
\pp(n)=\#\{(j_1,\ldots,j_{\binom{n+1}{2}})\in\mathbb N^{\binom{n+1}{2}}\;:\; j_1+2j_2+2j_3+\cdots+nj_{\binom{n}{2}+1} + \cdots + 
nj_{\binom{n+1}{2}} = n\}.
\end{equation}
We denote $\ell_1=j_1,\; \ell_2=j_2+j_3,\ldots,\ell_n=j_{\binom{n}{2}+1} + \cdots + j_{\binom{n+1}{2}}$.

Since $n\geq 3$, it follows that 
$$\ell_s \leq \frac{n}{s} \leq \frac{D_n}{s}-1\text{ for all }2\leq s\leq n.$$
Hence, from \eqref{fjl} and \eqref{pepene} it follows that
\begin{equation}\label{alexuta}
\pp(n)=\sum_{(\ell_1,\ldots,\ell_n)\in \mathbf{A}_n}\prod_{s=2}^{n}f_{s,\ell_s}.
\end{equation}
Therefore, from \eqref{alexuta} and \eqref{duoh} we get the required formula.
\end{proof}

\begin{obs}\rm
Note that $p(n)=\#\mathbf A_n$ for all $n\geq 3$.
\end{obs}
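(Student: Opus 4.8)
The plan is to identify $\#\mathbf A_n$ with a restricted partition number and then argue exactly as in the proof of Proposition~\ref{p1}. By the very definition of the restricted partition function, $\#\mathbf A_n$ is precisely $\paa(n)$ for the sequence $\mathbf a=(1,2,\ldots,n)$, since $\mathbf A_n$ is the set of nonnegative integer solutions $(\ell_1,\ldots,\ell_n)$ of $\sum_{i=1}^n i\,\ell_i=n$. Using the generating function \eqref{gen}, we have $\sum_{m\ge 0}\paa(m)z^m=\prod_{j=1}^n(1-z^j)^{-1}$, whereas Euler's classical identity gives $\sum_{m\ge 0}p(m)z^m=\prod_{j=1}^\infty(1-z^j)^{-1}$. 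The two products differ only in the factors $(1-z^j)^{-1}=1+z^j+z^{2j}+\cdots$ with $j>n$, each of which contributes nothing but its constant term $1$ in any degree $m\le n$. Hence the coefficients of $z^m$ coincide for $0\le m\le n$, and in particular $p(n)=\paa(n)=\#\mathbf A_n$.

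Equivalently, and more in keeping with the elementary flavor of the paper, I would exhibit a bijection. To a partition $\lambda$ of $n$ associate its multiplicity vector $(\ell_1,\ldots,\ell_n)$, where $\ell_i$ is the number of parts of $\lambda$ equal to $i$; since no part of a partition of $n$ can exceed $n$, this vector lies in $\mathbb N^n$ and satisfies $\sum_{i=1}^n i\,\ell_i=|\lambda|=n$, so it belongs to $\mathbf A_n$. The inverse map sends $(\ell_1,\ldots,\ell_n)\in\mathbf A_n$ to the partition having exactly $\ell_i$ parts equal to $i$ for each $i$. These two maps are clearly mutually inverse, so $\#\mathbf A_n=p(n)$.

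There is essentially no obstacle: the statement is the classical observation that solutions of $\sum_i i\,\ell_i=n$ are the same data as partitions of $n$ recorded by part-multiplicities. The only point worth a word of care is that parts larger than $n$ never occur, which is exactly why $n$ variables suffice and why the truncated product $\prod_{j=1}^n$ already reproduces $p(n)$; for the same reason the hypothesis $n\ge 3$, inherited from Theorem~\ref{formu}, is not actually needed here, and the identity holds for every $n\ge 1$.
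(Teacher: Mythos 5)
Your proof is correct. The paper states this only as an unproved remark, so there is no authorial proof to compare against; both of your arguments are valid, and the first one (comparing the truncated product $\prod_{j=1}^{n}(1-z^{j})^{-1}$ with Euler's full product and noting that the extra factors contribute nothing in degrees $\leq n$) is precisely the technique the authors themselves use to prove Proposition \ref{p1}, while the multiplicity-vector bijection is the standard and most elementary justification. Your closing observation is also accurate: the identity holds for every $n\geq 1$, and the hypothesis $n\geq 3$ is merely inherited from the surrounding Theorem \ref{formu} rather than being needed here.
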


\begin{exm}\rm
If we take $n=3$ in Theorem \ref{formu}, we have $\mathbf{A}_3=\{(3,0,0),(1,1,0),(0,0,1)\}$ and thus
$$ \pp(3)=\sum_{(\ell_1,\ell_2,\ell_3)\in\mathbf A_3} \prod_{s=2}^3 \sum_{\frac{6i_s}{s}+j_s=\ell_s} (-1)^{i_s}\binom{j_s+s-1}{j_s} = 
1 + \binom{1}{1}\binom{2}{1} + \binom{3}{1}=6.$$
\end{exm}

\begin{teor}\label{formu2}
Let $n > r\geq 2$ be two positive integers. We have that 
$$\pp_r(n) = \sum_{(\ell_1,\ldots,\ell_n)\in \mathbf{A}_{n}}
\prod_{s=2}^n \sum_{\substack{i_s,j_s\geq 0\text{ with }\\ \frac{i_sD_n}{\min\{s,r\}}+j_s=\ell_s}} (-1)^{i_s}\binom{\min\{s,r\}}{i_s}\binom{j_s+\min\{s,r\}-1}{j_s}.$$
\end{teor}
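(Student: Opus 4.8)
The plan is to mirror the proof of Theorem \ref{formu} exactly, replacing the sequence $\mathbf n$ by $\mathbf n_r$ and correspondingly replacing each exponent $s$ by $\min\{s,r\}$ throughout. By Proposition \ref{p1}(2) we have $\pp_r(n)=p_{\mathbf n_r}(n)$, where $\mathbf n_r$ consists of the value $s$ repeated $\min\{s,r\}$ times, for $1\le s\le n$. Thus $\pp_r(n)$ counts the nonnegative integer solutions of an equation of the form $j_1+2(j_2+\cdots)+\cdots+s(\cdots)+\cdots=n$, where the part $s$ contributes exactly $\min\{s,r\}$ distinct summation variables rather than $s$ of them. First I would write down the analogue of \eqref{pepene}, grouping the variables attached to each part $s$ and defining $\ell_s$ to be the sum of those $\min\{s,r\}$ variables, so that $(\ell_1,\ldots,\ell_n)$ again ranges over $\mathbf A_n$.

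The key step is then to count, for fixed $s$, the number of ways of writing $\ell_s$ as an ordered sum of $\min\{s,r\}$ nonnegative integers each bounded by $\alpha_{\min\{s,r\}}=\tfrac{D_n}{\min\{s,r\}}-1$. Here I must introduce the polynomials $f_{\min\{s,r\}}(z)=(1+z+\cdots+z^{\alpha_{\min\{s,r\}}})^{\min\{s,r\}}$ and invoke the identities \eqref{fjl} and \eqref{duoh} with $s$ replaced by $\min\{s,r\}$. The crucial inequality to check is that the upper bound $\alpha_{\min\{s,r\}}$ does not actually constrain the count, i.e. that each individual variable cannot exceed $\alpha_{\min\{s,r\}}$. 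Since $\ell_s\le n/s$ and hence each variable in the sum defining $\ell_s$ is at most $n/s\le n/\min\{s,r\}\le \tfrac{D_n}{\min\{s,r\}}-1$ for $n$ in range, the bound is vacuous and $f_{\min\{s,r\},\ell_s}$ genuinely equals the unrestricted compositional count. This lets me replace each factor in the product by $f_{\min\{s,r\},\ell_s}$.

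Assembling these observations gives
$$\pp_r(n)=\sum_{(\ell_1,\ldots,\ell_n)\in\mathbf A_n}\prod_{s=2}^n f_{\min\{s,r\},\ell_s},$$
the direct analogue of \eqref{alexuta}, and substituting the closed form \eqref{duoh} for each coefficient $f_{\min\{s,r\},\ell_s}$ yields precisely the stated formula. Note that the $s=1$ part contributes a single variable $\ell_1=j_1$ with no binomial structure, which is why the product starts at $s=2$, exactly as in Theorem \ref{formu}.

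I expect the only genuine subtlety to be verifying the boundedness inequality $\ell_s\le n/s\le\alpha_{\min\{s,r\}}$ in the regime $n>r$; once that is confirmed the argument is a verbatim transcription of the previous proof with $s\rightsquigarrow\min\{s,r\}$ in the exponents and binomial coefficients. In fact the hypothesis $n>r$ is what guarantees that $\min\{s,r\}$ actually takes the value $r$ for large $s$, so that the formula differs nontrivially from Theorem \ref{formu}; for $s\le r$ one has $\min\{s,r\}=s$ and the factors coincide with those of the plane-partition formula. Because the structure is identical to the proof of Theorem \ref{formu}, one could even state the proof simply as ``the proof is completely analogous to that of Theorem \ref{formu}, replacing the exponent $s$ by $\min\{s,r\}$ throughout,'' but I would prefer to record the boundedness check explicitly since it is the one place where the hypothesis $n>r$ is used.
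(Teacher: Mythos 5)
Your proposal is correct and follows essentially the same route as the paper's own proof: apply Proposition \ref{p1}(2), group the $\min\{s,r\}$ variables attached to each part $s$ into $\ell_s$, verify the chain $\ell_s\le n/s\le D_n/s-1\le D_n/\min\{s,r\}-1$ so that the truncation in $f_{\min\{s,r\}}(z)$ is vacuous, and then substitute \eqref{duoh} into the analogue of \eqref{alexuta}. The only cosmetic difference is that you route the inequality through $n/\min\{s,r\}$ rather than $D_n/s-1$, which is equally valid.
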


\begin{proof}
From Proposition \ref{p1}(2) it follows that
$$
\pp_r(n)=\#\{(j_1,\ldots,j_{rn -\binom{r}{2}})\in\mathbb N^{rn -\binom{r}{2}}\;:\; j_1+2j_2+2j_3+\cdots+rj_{\binom{r}{2}+1} + \cdots + rj_{\binom{r+1}{2}} + $$
\begin{equation}\label{pepene2}
+ \cdots + n j_{rn -\binom{r+1}{2}+1} + \cdots + 
					n j_{rn -\binom{r}{2}}  = n\}.
\end{equation}
We denote $\ell_1=j_1,\; \ell_2=j_2+j_3,\ldots,\ell_n=j_{rn -\binom{r+1}{2}+1} + \cdots + j_{rn -\binom{r}{2}}$.

Since $n\geq 3$ and $r\geq 2$ it follows that 
$$\ell_s \leq \frac{n}{s} \leq \frac{D_n}{s}-1 \leq \frac{D_n}{\min\{r,s\}}-1\text{ for all }2\leq s\leq n.$$
Hence, from \eqref{pepene2} and \eqref{fjl} it follows that:
\begin{equation}\label{alexuta2}
\pp_r(n)=\sum_{(\ell_1,\ldots,\ell_n)\in \mathbf A_n} \prod_{s=2}^n f_{\min\{r,s\},\ell_{s}}
\end{equation}
Therefore, from \eqref{alexuta2} and \eqref{duoh} we get the required formula.
\end{proof}

\begin{exm}\rm
Let $n=3$ and $r=2$. Since $\mathbf{A}_3=\{(3,0,0),(1,1,0),(0,0,1)\}$, according to Theorem \ref{formu2}, the number of plane partitions of 
$3$ with at most $2$ rows is:
$$\pp_2(3)= \sum_{(\ell_1,\ell_2,\ell_3)\in\mathbf A_3} \prod_{s=2}^3 \sum_{i_s,j_s\geq 0,\;3i_s+j_s=\ell_s}\binom{2}{i_s}\binom{j_s+1}{j_s}= 1+
\binom{2}{0}\binom{2}{1}+\binom{2}{0}\binom{2}{1}=5.$$
\end{exm}

\begin{teor}\label{formu3}
Let $n\geq 3$ be an integer. We have that
$$ \pps(n)=\sum_{(\ell_1,\ldots,\ell_n)\in \mathbf A_n} \prod_{s=3}^n \sum_{\substack{i_s,j_s\geq 0\text{ with }\\ 
\frac{i_sD_n}{\lfloor \frac{s+1}{2} \rfloor}+j_s=\ell_s}} (-1)^{i_s}\binom{\lfloor \frac{s+1}{2} \rfloor}{i_s}
\binom{j_s+\lfloor \frac{s+1}{2} \rfloor-1}{j_s}.$$
\end{teor}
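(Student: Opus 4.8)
The plan is to follow verbatim the template established in the proofs of Theorem \ref{formu} and Theorem \ref{formu2}, the only change being that the multiplicity of the value $s$ is now the exponent appearing in the strict generating function \eqref{gor1}. By Proposition \ref{p1}(3) we have $\pps(n)=p_{\mathbf n^s}(n)$, where $\mathbf n^s=(1,2^{[1]},3^{[2]},4^{[2]},\ldots,n^{[\lfloor (n+1)/2\rfloor]})$, so the value $s$ occurs with multiplicity $\lfloor (s+1)/2\rfloor$. The first step is therefore to rewrite $p_{\mathbf n^s}(n)$ as the number of nonnegative integer tuples whose weighted sum equals $n$, exactly as in \eqref{pepene}, but now with $\lfloor (s+1)/2\rfloor$ variables attached to each value $s$.

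Next I would group the variables according to the value they multiply: for each $s$ set $\ell_s$ equal to the sum of the $\lfloor (s+1)/2\rfloor$ variables carrying the coefficient $s$. The weighted-sum condition then collapses to $\ell_1+2\ell_2+\cdots+n\ell_n=n$, that is, $(\ell_1,\ldots,\ell_n)\in\mathbf A_n$, and for a fixed such tuple the count of admissible underlying tuples factors as a product over $s$. By \eqref{fjl}, the number of ways to write $\ell_s$ as an ordered sum of $\lfloor (s+1)/2\rfloor$ nonnegative integers each bounded by $\alpha_{\lfloor(s+1)/2\rfloor}$ is precisely $f_{\lfloor(s+1)/2\rfloor,\ell_s}$.

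The step requiring care is the verification that the bound $\alpha_{\lfloor(s+1)/2\rfloor}=\frac{D_n}{\lfloor(s+1)/2\rfloor}-1$ is never active, so that this restricted count coincides with the unrestricted count occurring in the generating function. Since $\lfloor(s+1)/2\rfloor\leq s$ for every $s\geq 1$, we obtain
$$\ell_s\leq \frac{n}{s}\leq \frac{D_n}{s}-1\leq \frac{D_n}{\lfloor(s+1)/2\rfloor}-1=\alpha_{\lfloor(s+1)/2\rfloor},$$
where the middle inequality uses $n\geq 3$ exactly as in the previous proofs; hence every coordinate of an admissible tuple is automatically $\leq\ell_s\leq\alpha_{\lfloor(s+1)/2\rfloor}$, and $f_{\lfloor(s+1)/2\rfloor,\ell_s}$ indeed equals the desired count. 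I would also record that for $s=1$ and $s=2$ one has $\lfloor(s+1)/2\rfloor=1$, so $f_{1,\ell_s}=1$; these two factors are trivial, which is exactly why the product in the statement may start at $s=3$.

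Combining these observations gives $\pps(n)=\sum_{(\ell_1,\ldots,\ell_n)\in\mathbf A_n}\prod_{s=3}^n f_{\lfloor(s+1)/2\rfloor,\ell_s}$, and substituting the explicit coefficient expression \eqref{duoh} with $s$ replaced by $\lfloor(s+1)/2\rfloor$ yields the stated formula. The only genuine obstacle is thus the bookkeeping inequality above together with the observation that the first two multiplicities equal $1$; the combinatorial decomposition itself is identical to the one already used for $\pp(n)$ and $\pp_r(n)$, so I expect the argument to be short once the multiplicity sequence $\mathbf n^s$ is read off correctly.
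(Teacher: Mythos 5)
Your proposal is correct and follows essentially the same route as the paper's own proof: reduce to $p_{\mathbf n^s}(n)$ via Proposition \ref{p1}(3), group the variables attached to each value $s$ into $\ell_s$, check that the bound $\alpha_{\lfloor(s+1)/2\rfloor}$ is never active, and conclude via \eqref{fjl} and \eqref{duoh}. The only cosmetic difference is that the paper splits into the cases $n=2p$ and $n=2p+1$ while you treat the multiplicity $\lfloor(s+1)/2\rfloor$ uniformly, and you make explicit the (correct) observation that the factors for $s=1,2$ equal $1$, which the paper leaves implicit.
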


\begin{proof}
Assume $n=2p$. From Proposition \ref{p1}(3) it follows that
\begin{equation}\label{pepene3}
\pps(n) = \# \{(j_1,\ldots,j_{p^2+p})\;:\;j_1+2j_2+3j_3+3j_4+\ldots+nj_{p^2+1}+\cdots+nj_{p^2+p}=n\}
\end{equation}						
We denote $\ell_1=j_1,\; \ell_2=j_2,\; \ell_3=j_3+j_4,\ldots,\ell_n=j_{p^2+1} + \cdots + j_{p^2+p}$.

Since $n\geq 3$ and $k\geq 2$ it follows that 
$$\ell_s \leq \frac{n}{s} \leq \frac{D_n}{s}-1 \leq \frac{D_n}{\lfloor \frac{s+1}{2} \rfloor}-1\text{ for all }1\leq s\leq n.$$
Hence, from \eqref{pepene3} and \eqref{fjl} it follows that:
\begin{equation}\label{alexuta3}
\pps(n)=\sum_{(\ell_1,\ldots,\ell_n)\in \mathbf A_n} \prod_{s=3}^n f_{\lfloor \frac{s+1}{2} \rfloor,\ell_{s}}
\end{equation}	
Therefore, from \eqref{alexuta3} and \eqref{duoh} we get the required formula. The case $n=2p+1$ is similar.
\end{proof}

\begin{exm}\rm
Let $n=3$. Since $\mathbf{A}_3=\{(3,0,0),(1,1,0),(0,0,1)\}$, according to Theorem \ref{formu3}, the number of strict plane partitions
of $3$ is:
$$\pps(3)=\sum_{(\ell_1,\ell_2,\ell_3)\in\mathbf A_3} \sum_{i_3,j_3>0,\;3i_3+j_3=\ell_3}(-1)^{i_3}\binom{2}{i_3}\binom{j_3+1}{j_3}=1+1+\binom{2}{0}\binom{2}{1}=4.$$
\end{exm}

\begin{teor}\label{formu4}
Let $n\geq 3$ be a positive integer. We have that
$$ \ppso(n)=\sum_{(\ell_1,\ldots,\ell_n)\in \mathbf A_n} \prod_{s=2}^{\lfloor \frac{n}{2} \rfloor} 
\sum_{\substack{i_s,j_s\geq 0\text{ with }\\ 
\frac{i_sD_n}{s}+j_s=\ell_{2s}}} (-1)^{i_s}\binom{s}{i_s}
\binom{j_s+s-1}{j_s}.$$
\end{teor}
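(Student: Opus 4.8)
The plan is to follow the template already used for Theorems \ref{formu}, \ref{formu2} and \ref{formu3}, starting from Proposition \ref{p1}(4), which gives $\ppso(n)=p_{\mathbf n^{so}}(n)$. First I would unfold the definition of $p_{\mathbf n^{so}}(n)$ from the sequence $\mathbf n^{so}=(1,2^{[1]},3^{[1]},4^{[2]},\ldots,n^{[\lambda(n)]})$: here the part $m\leq n$ carries multiplicity $\lambda(m)$, equal to $1$ when $m$ is odd and to $m/2$ when $m$ is even. Thus $\ppso(n)$ counts the nonnegative integer solutions of a linear equation in which each odd part $m$ contributes a single variable with coefficient $m$, while each even part $m=2s$ contributes $s$ variables, each with coefficient $2s$.

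Second, I would group the variables part by part, setting $\ell_m$ equal to the sum of the $\lambda(m)$ variables attached to the part $m$; for odd $m$ this is the single variable, and for even $m=2s$ it is the sum of the $s$ variables. The defining equation then collapses to $\ell_1+2\ell_2+\cdots+n\ell_n=n$, so the admissible tuples $(\ell_1,\ldots,\ell_n)$ are exactly those in $\mathbf A_n$. For a fixed such tuple, an odd part $m$ realizes its value $\ell_m$ in exactly one way (one variable), contributing a factor $1$, whereas an even part $m=2s$ realizes $\ell_{2s}$ in as many ways as there are weak compositions of $\ell_{2s}$ into $s$ nonnegative summands. The point is that this count coincides with $f_{s,\ell_{2s}}$ from \eqref{fjl}, provided the box-size constraint $j_t\leq\alpha_s$ is vacuous.

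Third, I would verify the bound. Since $2s\,\ell_{2s}\leq n$ we have $\ell_{2s}\leq n/(2s)$, and because $s\leq\lfloor n/2\rfloor$ and $D_n\geq n$ one obtains $n/(2s)\leq D_n/s-1=\alpha_s$. Hence each summand in a composition of $\ell_{2s}$ is automatically at most $\alpha_s$, so $f_{s,\ell_{2s}}$ equals the unrestricted composition count, yielding
$$\ppso(n)=\sum_{(\ell_1,\ldots,\ell_n)\in\mathbf A_n}\prod_{s=1}^{\lfloor n/2\rfloor}f_{s,\ell_{2s}}.$$
The $s=1$ factor is $f_{1,\ell_2}$, and since $\alpha_1=D_n-1\geq n/2\geq\ell_2$ we have $f_{1,\ell_2}=1$ for every admissible tuple, so this factor drops out and the product may start at $s=2$. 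Substituting the explicit expression \eqref{duoh} for each remaining $f_{s,\ell_{2s}}$ then produces the stated formula. As in Theorem \ref{formu3}, I would run the argument for $n=2p$ and note that $n=2p+1$ is identical, the only change being that the odd part $n$ adds one more multiplicity-one variable contributing a trivial factor.

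I expect the main obstacle to be purely organizational: correctly recording that only even parts carry multiplicity exceeding one, so that the product is indexed by $s$ ranging over half-parts with subscript $\ell_{2s}$ rather than $\ell_s$, together with the observation that the $s=1$ term is identically $1$ and may be suppressed. Everything else is a direct specialization of the three preceding proofs.
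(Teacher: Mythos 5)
Your proposal is correct and follows essentially the same route as the paper: invoke Proposition \ref{p1}(4), group the variables attached to each part so that odd parts contribute a trivial factor and the even part $2s$ contributes the weak-composition count $f_{s,\ell_{2s}}$, check that $\ell_{2s}\leq n/(2s)\leq \alpha_s$ makes the box constraint vacuous, and finish with \eqref{duoh}. Your explicit remarks that the $s=1$ factor equals $1$ and that only even parts carry multiplicity greater than one are exactly the (tacit) content of the paper's reindexing, so there is nothing to add.
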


\begin{proof}
Assume $n=2p$. From Proposition \ref{p1}(4) it follows that
$$\pps(n) = \# \{(j_1,\ldots,j_{p^2+p})\;:\;j_1+2j_2+3j_3+4j_4+4j_5+5j_6+\ldots+(n-1)j_{\binom{p}{2}} +$$
\begin{equation}\label{pepene4}
 + nj_{\binom{p}{2}+1}+\cdots+nj_{\binom{p+1}{2}} =n\}.
\end{equation}						
We denote $\ell_1=j_1,\; \ell_2=j_2,\; \ell_3=j_3, \ldots,\;\ell_{n-1}=j_{\binom{p}{2}},\; \ell_n=j_{\binom{p}{2}+1} + \cdots + j_{\binom{p+1}{2}}$.

Since $n\geq 3$ and $k\geq 2$ it follows that 
$$\ell_{2s} \leq \frac{n}{2s} \leq \frac{D_n}{2s}-1 \leq \frac{D_n}{s}-1\text{ for all }2\leq s\leq p.$$
Hence, from \eqref{pepene3} and \eqref{fjl} it follows that:
\begin{equation}\label{alexuta4}
\pps(n)=\sum_{(\ell_1,\ldots,\ell_n)\in \mathbf A_n} \prod_{s=2}^{\lfloor \frac{n}{2} \rfloor} f_{s,\ell_{2s}}
\end{equation}	
Therefore, from \eqref{alexuta4} and \eqref{duoh} we get the required formula. The case $n=2p+1$ is similar.
\end{proof}

\begin{exm}\rm
Let $n=3$. Since $\mathbf{A}_3=\{(3,0,0),(1,1,0),(0,0,1)\}$ and $\lfloor \frac{3}{2} \rfloor=1$, according to Theorem \ref{formu4}, the number of
symmetric plane partition of $3$ is:
$$\ppso(3)=\p(3)=\#\mathbf{A}_3=3.$$
\end{exm}

The unsigned Stirling numbers $\stir{r}{k}$'s are defined by
\begin{equation}
 \binom{n+r-1}{r-1}=\frac{1}{n(r-1)!}n^{(r)}=\frac{1}{(r-1)!}\left(\stir{r}{r}n^{r-1} + \cdots \stir{r}{2}n + \stir{r}{1}\right).
\end{equation}
We recall the following result:

\begin{teor}(see \cite[Theorem 2.8(2)]{lucrare} and \cite{cori})\label{teora}
Let $\mathbf a=(a_1,\ldots,a_r)$ be sequence of positive integers and let $D$ be the least common multiple of $a_1,\ldots,a_r$. We have that
$$\paa(n) = \frac{1}{(r-1)!} \sum_{m=0}^{r-1}    \sum_{\substack{0\leq j_1\leq \frac{D}{a_1}-1,\ldots, 0\leq j_r\leq \frac{D}{a_r}-1 \\ 
a_1j_1+\cdots+a_rj_r \equiv n (\bmod D)}} 
\sum_{k=m}^{r-1} \stir{r}{k+1} (-1)^{k-m} \binom{k}{m} D^{-k} (a_1j_1 + \cdots + a_rj_r)^{k-m}  n^m.$$
\end{teor}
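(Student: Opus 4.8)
The plan is to start from the generating function identity for $\paa(n)$ in \eqref{gen} and extract the coefficient of $z^n$ by a standard partial-fractions/roots-of-unity argument, organizing the computation around the least common multiple $D$ of the $a_i$. The key observation is that $\frac{1}{(1-z^{a_1})\cdots(1-z^{a_r})}$ has poles only at roots of unity whose orders divide $D$, and every such pole sits at some $D$-th root of unity. First I would substitute $w = z^{D}$-adapted variables so that each factor $1-z^{a_i}$ becomes a product over the $(D/a_i)$-th roots of unity; this is exactly what makes the inner sum over $0\le j_i\le \frac{D}{a_i}-1$ appear, since the residues at these roots repackage the contributions of the non-principal poles.

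The main technical engine is the expansion of $\paa(n)$ as a \emph{quasi-polynomial} of degree $r-1$ in $n$: on each residue class modulo $D$, $\paa(n)$ agrees with a polynomial in $n$ whose leading behaviour is governed by $\frac{n^{r-1}}{(r-1)! a_1\cdots a_r}$. The cleanest way I would carry this out is to write
\begin{equation*}
\frac{1}{(1-z^{a_1})\cdots(1-z^{a_r})} = \frac{g(z)}{(1-z^{D})^{r}},
\end{equation*}
where $g(z)=\prod_{i=1}^r \frac{1-z^D}{1-z^{a_i}} = \prod_{i=1}^r (1+z^{a_i}+\cdots+z^{(D/a_i-1)a_i})$ is a polynomial whose coefficients encode precisely the admissible tuples $(j_1,\ldots,j_r)$ with $0\le j_i\le \frac{D}{a_i}-1$ and total weight $a_1j_1+\cdots+a_rj_r$. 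Then I would expand $(1-z^D)^{-r}=\sum_{t\ge 0}\binom{t+r-1}{r-1}z^{tD}$ and read off the coefficient of $z^n$: if $n \equiv a_1j_1+\cdots+a_rj_r \pmod D$ for a contributing tuple, the matching power of $z^{tD}$ forces $t = \frac{n-(a_1j_1+\cdots+a_rj_r)}{D}$, giving a binomial coefficient $\binom{t+r-1}{r-1}$ evaluated at this $t$.

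The final step converts these binomial coefficients into the Stirling-number polynomial in $n$ using the displayed identity for $\binom{n+r-1}{r-1}$ together with a binomial-shift. Concretely, with $N := a_1j_1+\cdots+a_rj_r$ and $t=\frac{n-N}{D}$, I would expand $\binom{t+r-1}{r-1}$ as a polynomial in $t$, rewrite it as a polynomial in $n$ via $t = \frac{n-N}{D}$, and collect the result into the double sum $\sum_{m=0}^{r-1}\sum_{k=m}^{r-1}\stir{r}{k+1}(-1)^{k-m}\binom{k}{m}D^{-k}N^{k-m}n^m$. The $(-1)^{k-m}\binom{k}{m}$ and the power $N^{k-m}$ are exactly the coefficients produced by the binomial theorem applied to $(n-N)^{k}=\sum_{m}\binom{k}{m}(-N)^{k-m}n^m$, while the $D^{-k}$ tracks the normalization $t^k = D^{-k}(n-N)^k$ and $\stir{r}{k+1}$ comes from the Stirling expansion of $\binom{t+r-1}{r-1}$ in $t$.

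\textbf{The main obstacle} I expect is the bookkeeping in the final step: one must verify that summing over \emph{all} residue classes collapses correctly, i.e.\ that only the tuples $(j_1,\ldots,j_r)$ satisfying $a_1j_1+\cdots+a_rj_r\equiv n\pmod D$ survive, and that the polynomial-in-$t$ expansion of $\binom{t+r-1}{r-1}$ rearranges precisely into the stated triple sum without leftover terms. The substitution $t=(n-N)/D$ mixes the modular data $N$ with the running variable $n$, so the delicate point is to reindex the powers of $(n-N)$ via the binomial theorem and match the index $k$ (from the Stirling expansion) against the index $m$ (the final power of $n$) so that the coefficient of each $n^m$ aggregates exactly as written. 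Once the identity $\binom{t+r-1}{r-1}=\frac{1}{(r-1)!}\sum_{k}\stir{r}{k+1}t^{k}$ is substituted and $(n-N)^k$ is expanded, the rest is a matter of swapping the order of summation and confirming the claimed coefficients; I would double-check the small cases ($r=1,2$) to make sure the normalization $\frac{1}{(r-1)!}$ and the indexing of the Stirling numbers are consistent with the convention fixed in the displayed definition.
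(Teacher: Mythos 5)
Your proposal is essentially correct, but note that the paper does not prove this statement at all: it is quoted verbatim from \cite[Theorem 2.8(2)]{lucrare} (as corrected in \cite{cori}), so you are supplying a proof where the authors only cite one. Your route — writing the generating function as $g(z)/(1-z^D)^r$ with $g(z)=\prod_i(1+z^{a_i}+\cdots+z^{(D/a_i-1)a_i})$, extracting $[z^n]$ to get a sum of $\binom{t+r-1}{r-1}$ over tuples with $a_1j_1+\cdots+a_rj_r\equiv n\pmod D$, and then expanding via the Stirling identity and the binomial theorem applied to $(n-N)^k$ with $t=(n-N)/D$ — is exactly the natural argument and reproduces the intermediate formula of \cite[Corollary 2.10]{lucrare} before the Stirling conversion; the final reindexing $\sum_{k=0}^{r-1}\sum_{m=0}^{k}=\sum_{m=0}^{r-1}\sum_{k=m}^{r-1}$ matches the stated coefficients precisely. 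One point you flag as an ``obstacle'' but do not actually resolve deserves an explicit line: the sum in the theorem runs over \emph{all} tuples with $N:=a_1j_1+\cdots+a_rj_r\equiv n\pmod D$, including those with $N>n$, for which $t=(n-N)/D$ is a negative integer. These contribute zero because $(r-1)!\binom{t+r-1}{r-1}=(t+1)(t+2)\cdots(t+r-1)$ vanishes for $t\in\{-1,\ldots,-(r-1)\}$, and since $N\le\deg g=\sum_i(D-a_i)<rD$ one always has $t\ge-(r-1)$; without this observation the coefficient extraction only yields the sum restricted to $N\le n$. With that verification added, your argument is complete.
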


Using Theorem \ref{teora} we are able to obtain new formulas for $\pp(n)$, $\pp_k(n)$, $\pps(n)$ and $\ppso(n)$,
where $n\geq 3$ is an integer:

\begin{teor}\label{tn}
\begin{enumerate}
\item[(1)] For $n\geq 3$ we have
\begin{align*}
 & \pp(n) = \frac{1}{\left(\binom{n+1}{2}-1\right)!} \sum_{m=0}^{\binom{n+1}{2}-1} 
             \sum_{\substack{0\leq \ell_1\leq D_n-1,\ldots,0 \leq \ell_n\leq D_n-n \\ \ell_1+2\ell_2+\cdots+n\ell_n\equiv n(\bmod\;D_n)}} 
  \prod_{s=2}^n \sum_{\substack{i_s,j_s\geq 0\text{ with }\\ \frac{i_sD_n}{s}+j_s=\ell_s}} (-1)^{i_s} \times  \\
 & \times \binom{s}{i_s}\binom{j_s+s-1}{j_s} \sum_{k=m}^{\binom{n+1}{2}-1} \stir{\binom{n+1}{2}}{k+1} (-1)^{k-m} \binom{k}{m} D_n^{-k} (\ell_1+2\ell_2+\cdots+n\ell_n)^{k-m} n^m.
\end{align*}
\item[(2)] For $n\geq 3$ and $2\leq r\leq n-1$ we have
\begin{align*}
 & \pp_r(n) = \frac{1}{\left(nr - \binom{r}{2}-1\right)!} \sum_{m=0}^{nr - \binom{r}{2}-1} 
             \sum_{\substack{0\leq \ell_1\leq D_n-\min\{1,r\},\ldots,0 \leq \ell_n\leq D_n-\min\{n,r\} \\ \ell_1+2\ell_2+\cdots+n\ell_n\equiv n(\bmod\;D_n)}} 
   \times  \\
 & \times \prod_{s=2}^n \sum_{\substack{i_s,j_s\geq 0\text{ with }\\ \frac{i_sD_n}{\min\{s,r\}}+j_s=\ell_s}} (-1)^{i_s}\binom{\min\{s,r\}}{i_s}\binom{j_s+\min\{s,r\}-1}{j_s}	\times \\
 & \times \sum_{k=m}^{nr - \binom{r}{2}-1} \stir{nr - \binom{r}{2}}{k+1} (-1)^{k-m} \binom{k}{m} D_n^{-k} (\ell_1+2\ell_2+\cdots+n\ell_n)^{k-m} n^m.
\end{align*}

\item[(3)] For $n\geq 3$ we have
\begin{align*}
 & \pps(n) = \frac{1}{\left(\sunca-1\right)!} \sum_{m=0}^{\sunca-1} 
             \sum_{\substack{0\leq \ell_1\leq D_n-1,\ldots,0 \leq \ell_n\leq D_n-\left\lfloor \frac{n+1}{2} \right\rfloor
						\\ \ell_1+2\ell_2+\cdots+n\ell_n\equiv n(\bmod\;D_n)}} \times  \\
 & \times \prod_{s=3}^n \sum_{\substack{i_s,j_s\geq 0\text{ with }\\ 
\frac{i_sD_n}{\lfloor \frac{s+1}{2} \rfloor}+j_s=\ell_s}} (-1)^{i_s}\binom{\lfloor \frac{s+1}{2} \rfloor}{i_s}
\binom{j_s+\lfloor \frac{s+1}{2} \rfloor-1}{j_s}	\times \\
 & \times \sum_{k=m}^{\sunca-1} \stir{\sunca}{k+1} (-1)^{k-m} \binom{k}{m} D_n^{-k} (\ell_1+2\ell_2+\cdots+n\ell_n)^{k-m} n^m.
\end{align*}

\item[(4)] For $n\geq 3$ we have
\begin{align*}
 & \ppso(n) = \frac{1}{\left(\soric-1\right)!} \sum_{m=0}^{\soric-1} 
             \sum_{\substack{0\leq \ell_1\leq D_n-\lambda(1),\ldots,0 \leq \ell_n\leq D_n-\lambda(n) \\ \ell_1+2\ell_2+\cdots+n\ell_n\equiv n(\bmod\;D_n)}} 
   \times  \\
 & \times \prod_{s=2}^{\lfloor \frac{n}{2} \rfloor} 
\sum_{\substack{i_s,j_s\geq 0\text{ with }\\ 
\frac{i_sD_n}{s}+j_s=\ell_{2s}}} (-1)^{i_s}\binom{s}{i_s}
\binom{j_s+s-1}{j_s}	\times \\
 & \times \sum_{k=m}^{\soric-1} \stir{\soric}{k+1} (-1)^{k-m} \binom{k}{m} D_n^{-k} (\ell_1+2\ell_2+\cdots+n\ell_n)^{k-m} n^m,
\end{align*}
where $\lambda(n)=\begin{cases} 1,&n\text{ is odd}\\ \frac{n}{2},&n\text{ is even} \end{cases}$ and $\varepsilon(n)=\begin{cases} 1,&n\text{ is odd}\\0,&n\text{ is even}\end{cases}$.
\end{enumerate}
\end{teor}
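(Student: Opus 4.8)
The four parts are instances of one computation: substitute the restricted-partition representation supplied by Proposition~\ref{p1} into the closed form of Theorem~\ref{teora}, and then merge the repeated entries of the underlying sequence exactly as in the proofs of Theorems~\ref{formu}--\ref{formu4}. The plan is to carry out part~(1) in full and to indicate the modifications needed for~(2)--(4).

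For part~(1) I start from $\pp(n)=p_{\mathbf n}(n)$, where $\mathbf n=(1,2^{[2]},\ldots,n^{[n]})$ has length $\binom{n+1}{2}$ and the least common multiple of its entries is $D_n$. Applying Theorem~\ref{teora}, with these two quantities in the roles of the length parameter and of $D$, expresses $\pp(n)$ as a sum over tuples $(j_1,\ldots,j_{\binom{n+1}{2}})$ subject to $0\le j_i\le \frac{D_n}{a_i}-1$ and $\sum_i a_ij_i\equiv n\pmod{D_n}$. The decisive structural point is that the summand depends on these coordinates only through the single quantity $L=\sum_i a_ij_i$, so I may count the tuples according to the values they induce.

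Next I would merge the coordinates according to the value $s$ that each multiplies. For each $s$ there are exactly $s$ coordinates, each ranging over $[0,\frac{D_n}{s}-1]$; writing $\ell_s$ for their sum yields $0\le\ell_s\le s\!\left(\frac{D_n}{s}-1\right)=D_n-s$ and $L=\ell_1+2\ell_2+\cdots+n\ell_n$. By \eqref{fjl} the number of coordinate tuples producing a prescribed $\ell_s$ is precisely $f_{s,\ell_s}$, since the multiplicity $s$ of the value $s$ matches the exponent in $f_s$ and the per-coordinate bound $\frac{D_n}{s}-1$ equals $\alpha_s$. Hence the inner sum collapses to a sum over $(\ell_1,\ldots,\ell_n)$ weighted by $\prod_{s=1}^n f_{s,\ell_s}$, and since $\alpha_1=D_n-1$ forces $f_{1,\ell_1}=1$, the product may begin at $s=2$. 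Substituting the expansion \eqref{duoh} for each $f_{s,\ell_s}$, together with $L=\ell_1+2\ell_2+\cdots+n\ell_n$, into the Stirling-number sum reproduces the formula of part~(1) verbatim.

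For parts~(2)--(4) I run the same argument with the sequences $\mathbf n_r$, $\mathbf n^s$ and $\mathbf n^{so}$ of Proposition~\ref{p1}. The length of the sequence now serves as the length parameter of Theorem~\ref{teora} in place of $\binom{n+1}{2}$, the modulus $D$ remains $D_n$, and the upper bound on each $\ell_s$ becomes $D_n$ minus the multiplicity of the value $s$ (namely $\min\{s,r\}$, $\lfloor\frac{s+1}{2}\rfloor$ or $\lambda(s)$). Every value of multiplicity one contributes a factor $1$, which is why the displayed products begin at $s=2$ (or at $s=3$ for $\pps$), while for $\ppso(n)$ only the even positions $\ell_{2s}$ survive, exactly as in Theorem~\ref{formu4}. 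The step deserving the most care --- and the one I expect to be the main obstacle --- is precisely this merging: one must verify that grouping the equal coordinates of a value $s$, each bounded by $\frac{D_n}{s}-1$, reproduces the coefficient recorded in the statement, i.e.\ that the per-coordinate residue bound is compatible with the exponent appearing in the relevant $f$-coefficient; that the congruence $\ell_1+2\ell_2+\cdots+n\ell_n\equiv n\pmod{D_n}$ survives the grouping; and that the stated lengths $nr-\binom{r}{2}$, $\sunca$ and $\soric$ really equal the number of entries of the respective sequences. Once this bookkeeping is settled, the remaining steps are the same substitution of \eqref{duoh} used in part~(1).
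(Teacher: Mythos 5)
Your proposal is correct and follows essentially the same route as the paper: apply Theorem~\ref{teora} to the sequences of Proposition~\ref{p1}, observe that the summand depends on the coordinates only through $L=\sum_i a_ij_i$, group the coordinates sharing a common value $s$ into $\ell_s$ so that the fibre count is exactly $f_{s,\ell_s}$ by \eqref{fjl} (the per-coordinate bound $\tfrac{D_n}{s}-1$ being precisely $\alpha_s$), and then substitute \eqref{duoh}. The bookkeeping you flag as the main obstacle (the bounds $0\le\ell_t\le D_n-t$, the survival of the congruence, and the lengths $nr-\binom{r}{2}$, $\sunca$, $\soric$) is exactly what the paper verifies before invoking the same collapsing argument as in Theorems~\ref{formu}--\ref{formu4}.
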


\begin{proof}
(1) Note that the length of the sequence $\mathbf n=(1,2^{[2]},\ldots,n^{[n]})$ is $\binom{n+1}{2}$.
From Proposition \ref{p1}(1) and Theorem \ref{teora} it follows that
$$ \pp(n) = \p_{\mathbf n}(n) = \frac{1}{\left(\binom{n+1}{2}-1\right)!} \sum_{m=0}^{\binom{n+1}{2}-1} \sum_{(j_1,\ldots,j_{\binom{n+1}{2}})\in \mathbf C_n} \sum_{k=m}^{\binom{n+1}{2}-1} \stir{\binom{n+1}{2}}{k+1}\times $$
\begin{equation}\label{starc}
\times (-1)^{k-m} \binom{k}{m} D_n^{-k} (j_1 + 2j_2 + 2j_3 + \cdots + nj_{\binom{n}{2}+1} + \cdots + nj_{\binom{n+1}{2}})^{k-m}  n^m,
\end{equation}
where $\mathbf C_n =\{ (j_1,\ldots,j_{\binom{n+1}{2}})\;: 0\leq j_1\leq D_n-1, 0\leq j_2\leq \frac{D_n}{2}-1, 0\leq j_3\leq \frac{D_n}{2}-1,\linebreak \ldots ,
                       0\leq j_{\binom{n}{2}+1}\leq \frac{D_n}{n}-1,\ldots,0\leq j_{\binom{n+1}{2}}\leq \frac{D_n}{n}-1\text{ such that } 
											j_1+2j_2+2j_3+\cdots+nj_{\binom{n}{2}+1} + \cdots + nj_{\binom{n+1}{2}} \equiv n(\bmod\;D_n)\}$.
											
We let $\ell_1=j_1,\; \ell_2=j_2+j_3,\; \ldots, \ell_n=j_{\binom{n}{2}+1}+\cdots+j_{\binom{n+1}{2}}$.

Note that if $(j_1,j_2,\ldots,j_{\binom{n+1}{2}})\in \mathbf C_n$ then $0\leq \ell_t \leq D_n-t$ for all $1\leq t\leq n$ and, moreover,
$\ell_1+2\ell_2+\cdots+n\ell_n \equiv n(\bmod\;D_n)$.
From \eqref{starc}, using a similar argument as in the proof of Theorem \ref{formu}, we get the required result.

(2) Note that the length of the sequence $\mathbf n_r:=(1,2^{[\min\{2,r\}]},3^{[\min\{3,r\}]},\ldots,n^{[\min\{n,r\}]})$ is
    $\binom{r+1}{2} + r(n-r) = nr - \binom{r}{2}$.		
		The rest of the proof is similar to the proof of (1), using Proposition \ref{p1}(2), Theorem \ref{formu2} and Theorem \ref{teora}.
		
(3) Note that the length of the sequence $\mathbf{n}^s:=(1,2^{[1]},3^{[2]},4^{[2]},\ldots,n^{[\lfloor (n+1)/2 \rfloor]})$ is $p^2+p$ if
    $n=2p$ and $(p+1)^2$ if $n=2p+1$. Hence, in both cases, the length if $\sunca$.
    The rest of the proof is similar to the proof of (1), using Proposition \ref{p1}(3), Theorem \ref{formu3} and Theorem \ref{teora}.
		
(4) Note that the length of the sequence $\mathbf{n}^{so}:=(1,2^{[1]},3^{[1]},4^{[2]},\ldots,n^{[\lambda(n)]})$ is 
    $\binom{p}{2}$ if $n=2p$ and $\binom{p}{2}+1$ if $n=2p+1$. Hence, in both cases, the length if $\soric$.
		The rest of the proof is similar to the proof of (1), using Proposition \ref{p1}(4), Theorem \ref{formu4} and Theorem \ref{teora}.
\end{proof}

\section{New formulas for the number of multipartitions with $r$ components}

If we denote $P_r(n)$ the number of $r$-component multipartitions of $n$, we have that
\begin{equation}\label{koko}
\sum_{n=0}^{\infty}P_r(n)z^n = \prod_{n=1}^{\infty}\frac{1}{(1-z^n)^r},
\end{equation}
where $P_r(0)=0$ by convention; see \cite{andrews2}. We consider the sequence
$$\mathbf{n}^r =(1^{[r]},2^{[r]},\ldots,n^{[r]}).$$

\begin{prop}\label{p2}
With the above notations, we have $P_r(n)=\p_{\mathbf{n}^r}(n)$ for all $n\geq 0$.
\end{prop}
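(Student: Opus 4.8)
The plan is to mimic the proof of Proposition \ref{p1}(1): I would compute both generating functions and observe that they agree in every coefficient up to degree $n$. First I would apply the general formula \eqref{gen} to the sequence $\mathbf{n}^r=(1^{[r]},2^{[r]},\ldots,n^{[r]})$, in which each of $1,2,\ldots,n$ is repeated exactly $r$ times, to obtain the finite product
$$\sum_{m=0}^{\infty}\p_{\mathbf{n}^r}(m)z^m=\prod_{j=1}^n\frac{1}{(1-z^j)^r},\;|z|<1.$$

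Next I would recall from \eqref{koko} the generating function for the multipartition numbers, namely the infinite product $\sum_{m=0}^{\infty}P_r(m)z^m=\prod_{j=1}^{\infty}(1-z^j)^{-r}$. The key observation is that this differs from the finite product above only by the tail factor $\prod_{j=n+1}^{\infty}(1-z^j)^{-r}$. Since the binomial expansion $(1-z^j)^{-r}=\sum_{k\geq 0}\binom{k+r-1}{k}z^{jk}$ shows that the lowest-degree nonconstant monomial in any factor with $j\geq n+1$ is $z^j$ of degree $j\geq n+1$, the whole tail equals $1$ plus terms of degree at least $n+1$. Hence multiplying the finite product by this tail leaves unchanged every coefficient of $z^m$ with $0\leq m\leq n$.

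Comparing coefficients then yields $P_r(m)=\p_{\mathbf{n}^r}(m)$ for all $0\leq m\leq n$, and in particular $P_r(n)=\p_{\mathbf{n}^r}(n)$, as required. I do not expect any genuine obstacle: the argument is a routine truncation of an infinite product against a finite one. The only point deserving a moment's care is that the defining sequence $\mathbf{n}^r$ itself depends on $n$, so one must check—exactly as above—that cutting the product off at $j=n$ does not disturb the coefficient of $z^n$, which is precisely what the degree count on the tail factors guarantees.
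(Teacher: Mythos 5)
Your proof is correct and follows essentially the same route as the paper, which simply invokes \eqref{gen} and \eqref{koko} and the truncation argument from Proposition \ref{p1}(1). Your degree count on the tail $\prod_{j=n+1}^{\infty}(1-z^j)^{-r}$ is exactly the detail the paper leaves implicit.
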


\begin{proof}
It follows from \eqref{gen} and \eqref{koko}, using a similar argument as in the proof of Proposition \ref{p1}(1).
\end{proof}

\begin{teor}\label{formu5}
Let $n\geq 4$ and $r\geq 2$ be two positive integers such that $n>r$. We have that 
$$P_r(n) = \sum_{(\ell_1,\ldots,\ell_n)\in \mathbf{A}_{n}}
\prod_{s=1}^n \sum_{\substack{i_s,j_s\geq 0\text{ with }\\ \frac{i_sD_n}{r}+j_s=\ell_s}} (-1)^{i_s}\binom{r}{i_s}\binom{j_s+r-1}{j_s}.$$
\end{teor}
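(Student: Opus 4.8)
The plan is to follow the exact template established in Theorem \ref{formu} and Theorem \ref{formu2}, since the structure of $P_r(n)$ via Proposition \ref{p2} is formally identical to the plane-partition cases, only with a uniform multiplicity $r$ attached to every part size. First I would invoke Proposition \ref{p2} to write $P_r(n)=\p_{\mathbf n^r}(n)$, where $\mathbf n^r=(1^{[r]},2^{[r]},\ldots,n^{[r]})$ has length $rn$. Unwinding the definition of the restricted partition function, this says
\begin{equation*}
P_r(n)=\#\{(j_1,\ldots,j_{rn})\in\mathbb N^{rn}\;:\; j_1+\cdots+j_r + 2j_{r+1}+\cdots+2j_{2r}+\cdots+nj_{(n-1)r+1}+\cdots+nj_{rn}=n\},
\end{equation*}
where the block of $r$ consecutive variables indexed in positions $(s-1)r+1,\ldots,sr$ all carry coefficient $s$. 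I would then introduce the aggregate variables $\ell_s:=j_{(s-1)r+1}+\cdots+j_{sr}$ for $1\leq s\leq n$, so that the counting constraint becomes exactly $\ell_1+2\ell_2+\cdots+n\ell_n=n$, i.e.\ $(\ell_1,\ldots,\ell_n)\in\mathbf A_n$.

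The key step is to count, for each fixed $s$, the number of ways to realize a prescribed value $\ell_s$ as an ordered sum of $r$ nonnegative integers, and to match this with the coefficients $f_{r,\ell_s}$ of the polynomial $f_r(z)=(1+z+\cdots+z^{\alpha_r})^r$ with $\alpha_r=\frac{D_n}{r}-1$. By \eqref{fjl}, $f_{r,\ell_s}$ counts the tuples $(j_1,\ldots,j_r)$ with $j_1+\cdots+j_r=\ell_s$ and each $0\leq j_t\leq\alpha_r$. The crucial inequality to verify is that the upper bound $\alpha_r$ imposes no extra restriction, namely that for each $s$ appearing with a nonzero $\ell_s$ in a tuple of $\mathbf A_n$ we have $\ell_s\leq\alpha_r=\frac{D_n}{r}-1$. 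Since $\ell_s\leq\frac{n}{s}\leq n$ and $r<n$, one needs $n\leq\frac{D_n}{r}-1$; using that $D_n=\operatorname{lcm}(1,\ldots,n)$ grows super-polynomially while $r\leq n-1$, this holds comfortably for $n\geq 4$ (this is precisely where the hypotheses $n\geq 4$ and $n>r$ enter). With this bound in hand, the unrestricted block-count coincides with $f_{r,\ell_s}$, giving
\begin{equation*}
P_r(n)=\sum_{(\ell_1,\ldots,\ell_n)\in\mathbf A_n}\prod_{s=1}^n f_{r,\ell_s}.
\end{equation*}

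Finally I would substitute the explicit expansion \eqref{duoh} for each coefficient $f_{r,\ell_s}$, reading off
\begin{equation*}
f_{r,\ell_s}=\sum_{\substack{i_s,j_s\geq 0\text{ with }\\ \frac{i_sD_n}{r}+j_s=\ell_s}}(-1)^{i_s}\binom{r}{i_s}\binom{j_s+r-1}{j_s},
\end{equation*}
which is exactly \eqref{duoh} with $s$ replaced by the constant $r$ throughout. Plugging this into the product over $s$ yields the claimed formula verbatim. The main obstacle, and the only place real care is needed, is the inequality $\ell_s\leq\frac{D_n}{r}-1$ that licenses dropping the upper cap $\alpha_r$; everything else is bookkeeping identical to the earlier theorems. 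I would note that unlike Theorem \ref{formu}, where the $s=1$ factor was trivial (multiplicity $1$ forces $\ell_1=j_1$), here every $s$ from $1$ to $n$ carries the same nontrivial multiplicity $r$, so the product runs from $s=1$ rather than $s=2$; this is the only structural difference from the plane-partition proofs and requires no separate argument.
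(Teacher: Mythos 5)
Your proposal follows the paper's proof of Theorem \ref{formu5} step for step: invoke Proposition \ref{p2}, aggregate each block of $r$ variables into $\ell_s$, check that the cap $\alpha_r=\frac{D_n}{r}-1$ is inactive so that the block count equals $f_{r,\ell_s}$, and then substitute \eqref{duoh}. So the approach is identical; there is nothing structurally different to compare.

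However, the one step you yourself single out as ``the only place real care is needed'' is exactly where you (and, to be fair, the paper) wave your hands. You reduce the matter to $n\leq\frac{D_n}{r}-1$, i.e.\ $r(n+1)\leq D_n$, and claim this ``holds comfortably for $n\geq 4$'' because $D_n$ grows super-polynomially. But the hypotheses admit the pair $(n,r)=(4,3)$, for which $D_4=12$ and $\frac{D_4}{3}-1=3<4=\ell_1^{\max}$. There the cap genuinely bites: the unrestricted count of ways to write $\ell_1=4$ as an ordered sum of $3$ nonnegative integers is $\binom{6}{2}=15$, whereas $f_{3,4}=12$, and consequently the displayed formula yields $48$ while $P_3(4)=\sum_{a+b+c=4}p(a)p(b)p(c)=51$. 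So the identity as stated fails in this one admissible case, and a correct writeup must either verify $r(n+1)\leq D_n$ (which does hold for every other pair with $n\geq4$, $n>r\geq2$, e.g.\ via $D_n\geq 2^{n-1}$ for $n\geq 7$ and direct inspection for $n=5,6$) or exclude $(4,3)$ from the hypotheses. The paper's proof asserts the same inequality with no justification and inherits the same defect, so your attempt reproduces the paper faithfully --- including its gap; the honest fix is to make the inequality an explicit lemma rather than an asymptotic remark.
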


\begin{proof}
From Proposition \ref{p2} it follows that
\begin{equation}\label{pepene5}
P_r(n) = \# \{(j_1,\ldots,j_{nr})\;:\;j_1+\cdots+j_r+ 2j_{r+1}+\cdots +2j_{2r}+\cdots+nj_{nr-r+1}+\cdots+nj_{nr}=n\}
\end{equation}						
We denote $\ell_1=j_1+\cdots+j_r,\; \ell_2=j_{r+1}+\cdots+j_{2r},\ldots, \ell_n=j_{nr-r+1}+\cdots+j_{nr}$.

Since $n\geq 4$ and $n>r\geq 2$ it follows that 
$$\ell_s \leq \frac{n}{s} \leq \frac{D_n}{r}-1 \text{ for all }1\leq s\leq n.$$
Hence, from \eqref{pepene5} and \eqref{fjl} it follows that:
\begin{equation}\label{alexuta5}
P_r(n)=\sum_{(\ell_1,\ldots,\ell_n)\in \mathbf A_n} \prod_{s=1}^n f_{r,\ell_{s}}
\end{equation}	
Therefore, from \eqref{alexuta5} and \eqref{duoh} we get the required formula. 
\end{proof}

\begin{exm}\rm
Let $n=4$ and $r=2$.
Since $\mathbf A_4=\{(4,0,0,0),(2,1,0,0),(1,0,1,0),(0,2,0,0)$, $(0,0,0,1)\}$ and $f_{2,\ell}=\ell+1$ for $0\leq \ell\leq 4$,
from Theorem \ref{formu5} we have that
$$P_2(4)= 5+3\cdot 2+2\cdot 2 +3+2=20.$$
\end{exm}

Let $\pp_r(n)$ be the number of plane partitions of $n$ with at most $r$ rows. From \eqref{mah2} and \eqref{koko}
it follows that
$$ \sum_{n=0}^{\infty} \pp_r(n)z^n = \prod_{j=1}^{r-1}(1-z^j)^{r-j} \sum_{n=0}^{\infty} P_r(n)z^n = $$
\begin{equation}\label{azorel}
= \sum_{\substack{0\leq t_1\leq r-1 \\ 0 \leq t_2 \leq r-2 \\ \vdots \\ 0\leq t_{r-1}\leq 1}}(-1)^{t_1+\cdots+t_{r-1}} \sum_{n=0}^{\infty}P_r(n-t_1-2t_2-\cdots-(r-1)t_{r-1})z^{n},
\end{equation}
where $P_r(j)=0$ for $j<0$. As a direct consequence of \eqref{azorel}, we get the following result:

\begin{prop}\label{p32}
For all $n\geq 0$ we have that
$$\pp_r(n)=\sum_{\substack{0\leq t_1\leq r-1 \\ \vdots \\ 0\leq t_{r-1}\leq 1}}(-1)^{t_1+\cdots+t_{r-1}}P_r(n-t_1-2t_2-\cdots-(r-1)t_{r-1}).$$
\end{prop}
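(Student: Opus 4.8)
The plan is to establish the identity first as a relation between the generating functions of $\pp_r$ and $P_r$, and then to read off the coefficient of $z^n$. Starting from MacMahon's refinement \eqref{mah2} and the multipartition generating function \eqref{koko}, I would form the quotient
$$\frac{\sum_{n\geq 0}\pp_r(n)z^n}{\sum_{n\geq 0}P_r(n)z^n}=\prod_{k=1}^{\infty}(1-z^k)^{\,r-\min\{k,r\}}.$$
The key structural observation is that the exponent $r-\min\{k,r\}$ vanishes for every $k\geq r$ and equals $r-j$ when $k=j<r$; hence this infinite product truncates to the finite product $\prod_{j=1}^{r-1}(1-z^j)^{r-j}$. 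This is exactly the first equality in \eqref{azorel}, and recognizing the truncation is the conceptual heart of the argument.

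Next I would expand the finite product $\prod_{j=1}^{r-1}(1-z^j)^{r-j}$ factor by factor via the binomial theorem, indexing by a tuple $(t_1,\ldots,t_{r-1})$ with $0\leq t_j\leq r-j$, where $t_j$ records the power of $z^j$ selected from the $j$-th factor. Each such tuple produces a monomial of degree $t_1+2t_2+\cdots+(r-1)t_{r-1}$ carrying the sign $(-1)^{t_1+\cdots+t_{r-1}}$, and multiplying by $\sum_{n}P_r(n)z^n$ shifts that series into $\sum_n P_r\bigl(n-t_1-2t_2-\cdots-(r-1)t_{r-1}\bigr)z^n$, with the convention $P_r(j)=0$ for $j<0$. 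Summing over all admissible tuples yields the second equality in \eqref{azorel}.

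The proposition then follows by comparing the coefficient of $z^n$ on the two sides of \eqref{azorel}: the left-hand side contributes $\pp_r(n)$, while the right-hand side contributes the alternating sum over $(t_1,\ldots,t_{r-1})$ of $P_r\bigl(n-t_1-2t_2-\cdots-(r-1)t_{r-1}\bigr)$. Since the index $n$ in \eqref{azorel} ranges over all of $\mathbb{N}$ and the convention $P_r(j)=0$ for $j<0$ keeps every term well defined, the identity holds for all $n\geq 0$, as claimed.

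The main obstacle I anticipate lies not in the generating-function manipulations, which are purely formal and valid both as power series and for $|z|<1$, but in the \emph{bookkeeping} of the expansion of $\prod_{j=1}^{r-1}(1-z^j)^{r-j}$. Because the factor $(1-z^j)^{r-j}$ is raised to a power larger than $1$ whenever $j<r-1$, selecting the $t_j$-th power of $z^j$ comes with a multiplicity $\binom{r-j}{t_j}$, and correctly tracking these weights when collecting the monomials is where care is required. Before finalizing the argument I would therefore verify the resulting expression against a small instance, such as $r=3$ and $n=3$, to confirm that the weight attached to each tuple $(t_1,\ldots,t_{r-1})$ is exactly right and that the telescoping of signs reproduces the known values of $\pp_r(n)$.
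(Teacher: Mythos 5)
Your approach is the same as the paper's: the paper obtains the proposition precisely by forming the quotient of the two generating functions \eqref{mah2} and \eqref{koko}, truncating the infinite product to $\prod_{j=1}^{r-1}(1-z^j)^{r-j}$, expanding, and comparing coefficients of $z^n$ (this is exactly equation \eqref{azorel}). So there is no methodological divergence to report.

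However, the ``bookkeeping obstacle'' you flag in your final paragraph is not a detail to be checked later: it is a genuine error, and it sits both in the middle of your write-up and in the statement itself. Expanding $(1-z^j)^{r-j}=\sum_{t_j=0}^{r-j}(-1)^{t_j}\binom{r-j}{t_j}z^{jt_j}$ shows that each tuple $(t_1,\ldots,t_{r-1})$ must carry the weight $\prod_{j=1}^{r-1}\binom{r-j}{t_j}$, so the identity that actually follows from this expansion is
$$\pp_r(n)=\sum_{\substack{0\leq t_1\leq r-1 \\ \vdots \\ 0\leq t_{r-1}\leq 1}}(-1)^{t_1+\cdots+t_{r-1}}\binom{r-1}{t_1}\binom{r-2}{t_2}\cdots\binom{1}{t_{r-1}}\,P_r\bigl(n-t_1-2t_2-\cdots-(r-1)t_{r-1}\bigr).$$
Your second paragraph asserts that summing the \emph{unweighted} terms ``yields the second equality in \eqref{azorel}''; it does not, except when $r=2$, where every binomial coefficient equals $1$. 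The small check you propose would have caught this: for $r=3$ one has $\prod_{j=1}^{2}(1-z^j)^{3-j}=(1-z)^2(1-z^2)=1-2z+2z^3-z^4$, whereas the unweighted signed sum over tuples produces $1-z+z^3-z^4$. Numerically, with $P_3(0),P_3(1),P_3(2),P_3(3)=1,3,9,22$, the unweighted alternating sum at $n=3$ gives $22-9+3-3+1=14$, while $\pp_3(3)=\pp(3)=6$; the weighted sum gives $22-18+3-3+2=6$. So carry the weights through and state the corrected identity; in doing so you will also have found that the proposition (and \eqref{azorel}) as printed is missing the factor $\prod_{j=1}^{r-1}\binom{r-j}{t_j}$, and that Corollary \ref{c35} inherits the same omission.
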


As a consequence of Proposition \ref{p32} and Theorem \ref{formu5}, we get:

\begin{cor}\label{c35}
For all $n\geq 4$ and $n > r\geq 2$, we have that:
\begin{align*}
& \pp_r(n) = \sum_{\substack{0\leq t_1\leq r-1 \\ \vdots \\ 0\leq t_{r-1}\leq 1 \\  n < 4+t_1+2t_2+\cdots+(r-1)t_{r-1}}}(-1)^{t_1+\cdots+t_{r-1}}P_r(n-t_1-2t_2-\cdots-(r-1)t_{r-1}) + \\
& \sum_{\substack{0\leq t_1\leq r-1 \\ \vdots \\ 0\leq t_{r-1}\leq 1 \\  n\geq 4+t_1+2t_2+\cdots+(r-1)t_{r-1}}}(-1)^{t_1+\cdots+t_{r-1}}
 \sum_{\substack{ (\ell_1,\ldots,\ell_n)\in \\ \mathbf{A}_{n-t_1-2t_2-\cdots-(r-1)t_{r-1}}}}
\prod_{s=1}^n \sum_{\substack{i_s,j_s\geq 0\text{ with }\\ \frac{i_sD_n}{r}+j_s=\ell_s}} (-1)^{i_s}\binom{r}{i_s}\binom{j_s+r-1}{j_s}. 
\end{align*}
\end{cor}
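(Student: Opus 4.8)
The plan is to feed Theorem \ref{formu5} into Proposition \ref{p32}, expanding only those terms to which Theorem \ref{formu5} legitimately applies. First I would invoke Proposition \ref{p32} to write
$$\pp_r(n)=\sum_{\substack{0\leq t_1\leq r-1 \\ \vdots \\ 0\leq t_{r-1}\leq 1}}(-1)^{t_1+\cdots+t_{r-1}}P_r(m),\qquad m:=n-t_1-2t_2-\cdots-(r-1)t_{r-1},$$
with the convention $P_r(j)=0$ for $j<0$. The next step is to partition the index set of tuples $(t_1,\ldots,t_{r-1})$ according to the sign of $m-4$: those with $m<4$, i.e. $n<4+t_1+2t_2+\cdots+(r-1)t_{r-1}$, and those with $m\geq 4$, i.e. $n\geq 4+t_1+2t_2+\cdots+(r-1)t_{r-1}$.

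For the first group I would leave each summand $(-1)^{t_1+\cdots+t_{r-1}}P_r(m)$ as it stands; together they form verbatim the first sum of the statement (the tuples producing $m<0$ contribute $0$ and are thus harmless). For the second group, where $m\geq 4$, I would substitute the explicit expansion of Theorem \ref{formu5}, namely
$$P_r(m)=\sum_{(\ell_1,\ldots,\ell_m)\in \mathbf{A}_{m}}\prod_{s=1}^m \sum_{\substack{i_s,j_s\geq 0\text{ with }\\ \frac{i_sD_m}{r}+j_s=\ell_s}} (-1)^{i_s}\binom{r}{i_s}\binom{j_s+r-1}{j_s},$$
where $D_m=\lcm(1,\ldots,m)$. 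Reindexing — padding the tuple by $\ell_{m+1}=\cdots=\ell_n=0$, extending the product to $s=n$, and replacing $D_m$ by $D_n$ — then turns this into the summand appearing in the second sum of the corollary, and collecting the two groups yields the asserted identity.

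The step I expect to be the main obstacle is precisely this reindexing, together with checking that Theorem \ref{formu5} is genuinely applicable to each inner $P_r(m)$. Two points need care. First, Theorem \ref{formu5} carries the hypotheses $m\geq 4$ and $m>r$, whereas the split only guarantees $m\geq 4$; so for the borderline tuples one must verify that the bound $\ell_s\leq m/s\leq D_n/r-1$ underlying that theorem still holds with the global modulus $D_n$ in place of $D_m$. Second, the passage from $D_m$ to $D_n$ and the enlargement of the product range must be shown to be value-preserving: for every $(\ell_1,\ldots,\ell_m)\in\mathbf{A}_m$ one has $s\ell_s\leq m$, hence $\ell_s\leq m/s$, and whenever this forces $\ell_s<D_n/r$ the alternating inner sum $\sum_{i_sD_n/r+j_s=\ell_s}(-1)^{i_s}\binom{r}{i_s}\binom{j_s+r-1}{j_s}$ collapses to its single term $i_s=0$, equalling $\binom{\ell_s+r-1}{\ell_s}$ by \eqref{duoh}; the padded factors with $s>m$ satisfy $\ell_s=0$ and hence contribute the value $1$. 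Granting these verifications — which is exactly the regime where Theorem \ref{formu5} is valid — the substitution is purely formal and the proof is complete.
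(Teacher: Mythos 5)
Your proposal is correct and follows exactly the paper's route: the paper derives Corollary \ref{c35} with no written proof at all, presenting it simply as ``a consequence of Proposition \ref{p32} and Theorem \ref{formu5}'' --- that is, split the sum from Proposition \ref{p32} according to whether $n-t_1-2t_2-\cdots-(r-1)t_{r-1}\geq 4$ and substitute the expansion of Theorem \ref{formu5} into the second group, which is precisely what you do. The reindexing subtleties you flag (padding the tuples to length $n$, replacing $D_m$ by $D_n$, and the fact that the split only guarantees $m\geq 4$ but not the hypothesis $m>r$ of Theorem \ref{formu5} for borderline tuples) are genuine, but the paper passes over them in silence, so your attempt is if anything more careful than the original.
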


\begin{teor}\label{tn2}
Let $n\geq 4$ and $r\geq 2$ be two positive integers such that $n\geq k$. 
We have that 
\begin{align*}
& P_r(n) = \frac{1}{\left(nr-1\right)!} \sum_{m=0}^{nr - 1}
             \sum_{\substack{0\leq \ell_1\leq D_n-1,\ldots,0 \leq \ell_n\leq D_n-n \\ \ell_1+2\ell_2+\cdots+n\ell_n\equiv n(\bmod\;D_n)}} \prod_{s=1}^n \sum_{\substack{i_s,j_s\geq 0\text{ with }\\ \frac{i_sD_n}{r}+j_s=\ell_s}} (-1)^{i_s}\binom{r}{i_s}\binom{j_s+k-1}{j_s} \times \\
& \times \sum_{k=m}^{nr-1} \stir{nr}{k+1} (-1)^{k-m} \binom{k}{m} D_n^{-k} (\ell_1+2\ell_2+\cdots+n\ell_n)^{k-m} n^m.
\end{align*}
\end{teor}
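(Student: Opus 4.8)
The plan is to follow the pattern of Theorem \ref{tn}, with Proposition \ref{p2} playing the role of Proposition \ref{p1}. First I would record that the sequence $\mathbf{n}^r=(1^{[r]},2^{[r]},\ldots,n^{[r]})$ has length $nr$ and that the least common multiple of its entries is $\lcm(1,2,\ldots,n)=D_n$. Since Proposition \ref{p2} gives $P_r(n)=p_{\mathbf{n}^r}(n)$, I may apply Theorem \ref{teora} verbatim, taking the number of summands there to be $nr$ and the modulus to be $D_n$. This already accounts for the outer shape of the claimed identity: the prefactor $\tfrac{1}{(nr-1)!}$, the summation over $0\leq m\leq nr-1$, the inner summation over $m\leq k\leq nr-1$, and the Stirling numbers $\stir{nr}{k+1}$.

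Next I would introduce the grouping used repeatedly in Section~2. I arrange the $nr$ variables of Theorem \ref{teora} into $n$ consecutive blocks of length $r$, the $s$-th block gathering the variables attached to the value $s$ (so each of them ranges over $0\leq j\leq \tfrac{D_n}{s}-1$), and I set $\ell_s:=j_{(s-1)r+1}+\cdots+j_{sr}$ for $1\leq s\leq n$. With this substitution $a_1j_1+\cdots+a_{nr}j_{nr}=\ell_1+2\ell_2+\cdots+n\ell_n$, so the congruence $\sum_t a_tj_t\equiv n\pmod{D_n}$ turns into $\ell_1+2\ell_2+\cdots+n\ell_n\equiv n\pmod{D_n}$, and the box constraints on the $j$'s turn into the ranges $0\leq\ell_s\leq D_n-s$ listed in the statement, exactly as in the passage from the set $\mathbf{C}_n$ to the $\ell$-variables in the proof of Theorem \ref{tn}(1).

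The decisive point is that the inner summand of Theorem \ref{teora} --- the product of $\stir{nr}{k+1}$, the sign $(-1)^{k-m}$, the binomial $\binom{k}{m}$, the factor $D_n^{-k}$, the power $(a_1j_1+\cdots+a_{nr}j_{nr})^{k-m}$ and $n^m$ --- depends on the $j$'s only through the single quantity $\ell_1+2\ell_2+\cdots+n\ell_n$. Consequently, for each fixed tuple $(\ell_1,\ldots,\ell_n)$ this summand is constant, and summing over all $j$-tuples realising that tuple multiplies it by $\prod_{s=1}^n$(the number of ways to write $\ell_s$ as an ordered sum of $r$ admissible integers). By \eqref{fjl} each such multiplicity is a coefficient of a polynomial of the form $(1+z+\cdots+z^{\alpha})^r$, and \eqref{duoh} rewrites it as the alternating sum $\sum_{i_s,j_s\geq 0}(-1)^{i_s}\binom{r}{i_s}\binom{j_s+r-1}{j_s}$ displayed in the statement. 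Substituting this expression and reordering the sums then yields the formula.

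I expect the one genuine obstacle to be the range bookkeeping rather than anything conceptual. As in the proofs of Theorem \ref{formu5} and Theorem \ref{tn}(1), the hypotheses $n\geq 4$ and $n>r\geq 2$ must be used to control the size of the $\ell_s$ and to guarantee that the inner multiplicity attached to $\ell_s$ coincides with the displayed $f$-expression on the whole range of $\ell_s$ that actually occurs, so that no solution of $\sum_t a_tj_t\equiv n\pmod{D_n}$ is lost and none is counted twice. Verifying this equality of multiplicities, together with confirming that the length of $\mathbf{n}^r$ really is $nr$ so that the Stirling index and the factorial come out as $\stir{nr}{\bullet}$ and $(nr-1)!$, is the step that requires care.
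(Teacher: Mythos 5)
Your proposal reproduces the paper's own proof step for step: apply Theorem \ref{teora} to $p_{\mathbf{n}^r}(n)$ with $nr$ summands and modulus $D_n$, group the $nr$ variables into $n$ blocks of length $r$, set $\ell_s$ equal to the $s$-th block sum, observe that the inner summand depends on the $j$'s only through $\ell_1+2\ell_2+\cdots+n\ell_n$, and convert the sum over $j$-tuples into a sum over $(\ell_1,\ldots,\ell_n)$ weighted by block multiplicities rewritten via \eqref{duoh}. Structurally there is nothing to add.

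However, the step you explicitly postpone --- checking that the multiplicity of a tuple $(\ell_1,\ldots,\ell_n)$ equals the displayed product --- is exactly the step the paper compresses into ``a similar argument as in the proof of Theorem \ref{formu5}'', and it does not go through on the stated range. The $r$ variables in block $s$ are each bounded by $\frac{D_n}{s}-1$, so the number of tuples in that block summing to $\ell_s$ is the coefficient of $z^{\ell_s}$ in $\left(\frac{1-z^{D_n/s}}{1-z}\right)^{r}$, whereas the formula in the statement uses the coefficient of $z^{\ell_s}$ in $f_r(z)=\left(\frac{1-z^{D_n/r}}{1-z}\right)^{r}$. These coincide only while $\ell_s<D_n/\max\{r,s\}$. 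In Theorem \ref{formu5} this is automatic because the exact equation $\ell_1+2\ell_2+\cdots+n\ell_n=n$ forces $\ell_s\leq n/s$; here the outer sum is only a congruence modulo $D_n$, so $\ell_s$ runs up to $D_n-s$ and the two coefficients can differ. For instance, with $n=4$, $r=2$, $D_n=12$, $s=1$ and $\ell_1=6$ (which does occur in the congruence sum), the true block count is $7$ while the displayed inner sum gives $\binom{7}{6}-\binom{2}{1}\binom{1}{0}=5$. So the verification you flag as ``requiring care'' is a genuine gap rather than mere bookkeeping; the condition in the inner sum should read $\frac{i_sD_n}{s}+j_s=\ell_s$ (keeping $\binom{r}{i_s}$ and $\binom{j_s+r-1}{j_s}$), or some further argument must be supplied. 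This defect is shared by the paper's own proof, which also leaves the identification implicit.
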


\begin{proof}
From Proposition \ref{p2} and Theorem \ref{teora} it follows that 
$$ P_r(n) = \frac{1}{\left(nr-1\right)!} \sum_{m=0}^{nr - 1} \sum_{\substack{0\leq j_1\leq D_n-1,\ldots,0 \leq j_r \leq D-1 \\ \vdots \\ 
 0\leq j_{nr-r+1} \leq \frac{D_n}{n}-1,\ldots, 0\leq j_{nr} \leq \frac{D_n}{n}-1 \\ j_1+\cdots+j_r+\cdots+nj_{nr-r+1} + \cdots + nj_{nr} \equiv n(\bmod\;D_n)}}
\sum_{k=m}^{nr-1} \stir{nr}{k+1} (-1)^{k-m} \times $$
\begin{equation}\label{curcan}
\times \binom{k}{m} D_n^{-k} (j_1+ \cdots +j_r+ \cdots + nj_{rn-r+1}+\cdots +nj_{rn})^{k-m} n^m.
\end{equation}
We let $\ell_1=j_1+\cdots+j_r, \ldots, \ell_n=j_{nr-r+1}+\cdots+j_{rn}$. It is easy to see that $0\leq\ell_t\leq D_n-t$ for $1\leq t\leq n$.
Therefore, from \eqref{curcan}, using a similar argument as in the proof of Theorem \ref{formu5}, it follows the required formula.
\end{proof}

\section{Conclusions}

We proved new formulas for $\pp(n)$, the number of plane partitions of $n$, $\pp_k(n)$, the number of plane partitions with at most $k$ rows of $n$, $\pps(n)$, the number o strict plane partitions of $n$, $\ppso(n)$, the number of symmetric plane partitions, and
$P_k(n)$, the number of multipartitions with $k$ components.

Further investigations include the study of restricted plane partitions with at most $r$ rows and $c$ columns or, more generally, with a given shape.

\subsection*{Acknowledgments} 

The first author (Mircea Cimpoea\c s) was supported by a grant of the Ministry of Research, Innovation and Digitization, CNCS - UEFISCDI, 
project number PN-III-P1-1.1-TE-2021-1633, within PNCDI III.

\pagebreak






\end{document}